\def \R {\mathbb R}
\def \C {\mathbb C}
\def \bj {\bar{j}}
\def \bl {\bar{l}}
\def \bm {\bar{m}}
\def \bn {\bar{n}}
\def \bk {\bar{k}}
\def \bi {\bar{i}}
\def \bq {\bar{q}}
\def \db {\bar{\partial}}
\def \d {\partial}
\def \Tr {\text{tr}}
\def \lapl {\Delta}
\def\XXint#1#2#3{{\setbox0=\hbox{$#1{#2#3}{\int}$ }
\vcenter{\hbox{$#2#3$ }}\kern-.6\wd0}}
\newtheorem {Theorem} {Theorem}[section]
\newtheorem {Proposition}[Theorem]      {Proposition}
\newtheorem {Lemma}      [Theorem]       {Lemma}
\newtheorem {Corollary} [Theorem] {Corollary}
\theoremstyle{definition}
\newtheorem  {Definition}{Definition}[section]
\theoremstyle{remark}
\newtheorem{Remark}[Theorem]	{Remark}
\newtheorem{Example}[Theorem]	{Example}
\title{A new positivity condition for the curvature of Hermitian manifolds}
\author{Freid Tong}
\date{}
\newcommand{\Addresses}{{
		\bigskip
		\footnotesize
		\textsc{Department of Mathematics, Columbia University,
			2990 Broadway, New York, NY 10027}\par\nopagebreak
		\textit{E-mail address}: \texttt{tong@math.columbia.edu}
}}
\begin{document}
\maketitle

\let\thefootnote\relax\footnote{This work is supported in part by NSF grant DMS-1855947.}
\begin{abstract}
	In this note, we introduce a new type of positivity condition for the curvature of a Hermitian manifold, which generalizes the notion of nonnegative quadratic orthogonal bisectional curvature to the non-K\"ahler case.  We derive a Bochner formula for closed $(1, 1)$-forms from which this condition appears naturally and prove that if a Hermitian manifold satisfy our positivity condition, then any class $\alpha \in H^{1, 1}_{BC}(X)$ can be represented by a closed $(1, 1)$-form which is parallel with respect to the Bismut connection. Lastly, we show that such a curvature positivity condition holds on certain generalized Hopf manifolds and on certain Vaisman manifolds. 
\end{abstract}
\section{Introduction}
A general principle in K\"ahler geometry has been that positive curvature conditions impose important geometric and topological constraints on the manifold. A manifestation of this is the solution of the Frankel conjecture by Siu-Yau and Mori \cite{SiuYau, Mori}, and its generalization by Mok \cite{Mok}, which can be viewed as a uniformization theorem for K\"ahler manifolds with nonnegative bisectional curvature. In recent years, the study of non-K\"ahler Hermitian geometry has received a lot of attention, partly due to its connection with heterotic string theory \cite{FHP, FeiYau, FuYau, Phong, PPZ1}. It is then natural to look for curvature positivity conditions in non-K\"ahler geometry which can lead to significant geometric and topological consequences. In one such direction, Ustinovskiy \cite{Ust} proposes to study the uniformization of Hermitian manifolds with nonnegative Griffiths curvature by using a geometric flow which preserves the positivity of the Griffiths curvature. As shown by Fei and Phong \cite{FeiPh}, the Hermitian curvature flow which he identified as positive curvature preserving is precisely the same as the flows motivated by string theory introduced in \cite{PPZ1}.

In this article, we introduce a new type of curvature positivity condition involving a tensor $Q$, which is different from the Chern curvature. This tensor $Q$ is less positive than the Chern curvature, hence the corresponding positivity conditions formulated using $Q$ are generally stronger than the corresponding positivity of the Chern curvature. We prove a general Bochner-Kodaira type formula, in which this tensor appears naturally and we show that under a certain $Q$-nonnegativity condition, every class in $\alpha \in H^{1, 1}_{BC}(X)$ can be represented by a closed $(1, 1)$-form which is parallel with respect to the Bismut connection. This can be seen as a generalization of a theorem by Howard-Smyth-Wu \cite{HSW} to the non-K\"ahler setting. Perhaps surprisingly, $Q$ also arises in several different contexts in non-K\"ahler geometry: on one hand, it can be interpreted as a combination of the form $\d\db\omega$ with the $(1,1)$-part of the curvature of the Bismut connection. On the other hand, it acquires an additional symmetry on Vaisman manifolds. We discuss some corresponding applications, which suggest that $Q$ may be very useful for future investigations in non-K\"ahler geometry.

\subsection{Background and conventions}
Let $(X, J, g)$ be a compact Hermitian manifold, then in local holomorphic coordinates, we can write $\omega = \sum_{i, j}g_{i\bj}\sqrt{-1}dz^i\wedge d\bar{z}^j$. 

On the holomorphic tangent bundle, there is a natural connection called the Chern connection, which in holomorphic coordinate is
\begin{align}
	\nabla_iX^j& = \d_iX^j +\Gamma_{il}^jX^l\\
	\nabla_{\bi}X^j &= \d_{\bi}X^j
\end{align}
where the connection coefficients are given by
\[\Gamma_{ki}^j = g^{j\bm}\d_kg_{\bm i}\]
The Torsion tensor of the Chern connection is $T(X, Y)= \nabla_XY-\nabla_YX-[X, Y]$ and it satisfies $T(JX, Y) = T(X, JY)$. Hence the torsion has no $(1, 1)$-component, and it is entirely determined by its $(2, 0)$-component. In local holomorphic coordinates, this is given by
\[ T_{ij}^k = \Gamma_{ij}^k-\Gamma_{ji}^k\]
which is zero iff the metric is K\"ahler. 

The curvature of the Chern connection satisfies the symmetry 
\[R(X, Y, W, Z) = R(JX, JY, W, Z) = R(X, Y, JW, JZ)\]
this implies the curvature has no $(2, 0)$ or $(0, 2)$ components and the entire curvature tensor is determined by the $(1, 1)$-part, which in holomorphic coordinates is given by
\[R_{i\bj k}^{\;\;\;\; l} = -\d_{\bj}\Gamma_{ik}^l.\]
We remark that the Chern connection can be characterized as the unique connection such that is Hermitian i.e $\nabla g =0$ and satisfy $\nabla^{0, 1}  = \db_{T^{1, 0}M}$ where $\nabla^{0, 1}$ is the $(0, 1)$ part of the connection and $\db_{T^{1, 0}M}$ is the d-bar operator on holomorphic vector bundles. 

\subsection{Positivity condition}
For this paper, we are interested in a 4-tensor $Q$, which is defined by the following
\begin{equation}
	Q_{i\bj k\bl} = R_{i\bj k\bl} - g^{p\bq}T_{k p\bj}\overline{T_{l q\bi}}
\end{equation}
It turns out that this tensor naturally appears in a Bochner formula for closed $(1, 1)$-forms and has a nice connection with the Bismut connection. 
\begin{Remark}
It is also worth noting that the contraction of $Q$ in the first two coordinates coincides with the evolution term in the pluriclosed flow defined by Streets and Tian in \cite{ST1}, the same term also appears in the Anomaly flow in dimension three which is studied by Phong, Picard, Zhang in \cite{PPZ1}.
\end{Remark}
We now state the positivity condition that we are interested in, this new positivity condition is expressed in terms of the $Q$ tensor. 
\begin{Definition}
	We say $(X, J, g)$ is {\em $Q$-nonnegative} if in any orthonormal frame (i.e where $g_{i\bj} = \delta_{ij}$) and for any list of real numbers $\lambda_1, \ldots, \lambda_n$, we have
	\begin{equation}\label{eq: Q-nonnegative}
	\sum_{k, m=1}^nQ_{m\bm k\bk}(\lambda_k^2-\lambda_k\lambda_m)\geq 0
	\end{equation}
\end{Definition}

\begin{Remark}
	If $Q$ satisfies an additional symmetry $Q_{i\bj k\bl} = Q_{k\bl i\bj}$, then this expression simplifies
	\begin{equation}
	2\sum_{k, m=1}^nQ_{m\bm k\bk}(\lambda_k^2-\lambda_k\lambda_m) = \sum_{k, m =1}^nQ_{m\bm k\bk}(\lambda_k-\lambda_m)^2
	\end{equation}
	This is the case when the metric is K\"ahler, in that case $Q = R$ and the condition reduces to the nonnegative quadratic orthogonal bisectional curvature condition. (see \cite{ChauTam})  
\end{Remark}

\begin{Remark}
	A simple observation is that if a product manifold $M\times N$ is $Q$-nonnegative, then each of its components must be $Q$-nonnegative.
\end{Remark}

\begin{Example}
	All Bismut-flat manifold are $Q$-nonnegative. Indeed, In it is shown in \cite{ZhaoZheng} that these manifold have to be pluriclosed, and from the discussion in section~\ref{sec: Q and Bismut}, it follows that on these manifolds, $Q$ is equal to the curvature of the Bismut connection which is $0$, hence Bismut-flat manifolds are all $Q$-nonnegative. These manifolds has been classified in \cite{WYZ}, and their universal covers must a product of a compact semisimple Lie group with a real vector space. 
\end{Example}
We will give several other examples where the positivity condition holds in Section 3. 

We now state our main theorem regarding manifolds with $Q$-nonnegative $Q$ tensors. Our theorem implies that on a $Q$-nonnegative manifold, one can always solve the Poincare-Lelong equation for a closed $(1, 1)$-form up to the addition of a Bismut parallel $(1, 1)$-form. 

\begin{Theorem}\label{Main Theorem}
	Suppose $(X, J, g)$ is a compact Hermitian manifold which is $Q$-nonnegative, then any closed $(1, 1)$-form with constant trace is parallel with respect to the Bismut connection. In particular, any class in the Bott-Chern cohomology $H^{1, 1}_{BC}(X)$ contains a representative which is parallel respect to the Bismut connection. 
\end{Theorem}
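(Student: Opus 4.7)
The plan is to prove the theorem in two stages: first establish a Bochner-type identity for any closed $(1,1)$-form $\eta$ with constant trace, in which the curvature term is exactly the quadratic form appearing in the definition of $Q$-nonnegativity; second, reduce an arbitrary Bott-Chern class to the constant-trace case via a scalar elliptic equation.

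For the Bochner step, I would first rewrite the two hypotheses in Chern-connection language. Writing $\eta = \sqrt{-1}\eta_{i\bj}dz^i\wedge d\bar{z}^j$, the closedness $d\eta = 0$ is equivalent, using $\grad g = 0$, to
\begin{equation}
\grad_k\eta_{i\bj} - \grad_i\eta_{k\bj} = -T_{ki}^l\eta_{l\bj},
\end{equation}
together with its conjugate, while the constant-trace hypothesis reads $g^{i\bj}\grad_k\eta_{i\bj} = 0$. I would then compute the rough Chern Laplacian of $|\eta|^2$, or equivalently integrate $\int_X g^{p\bq}\grad_{\bq}\grad_p\eta_{k\bj}\cdot\overline{\eta^{k\bj}}$ by parts against itself. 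Commuting $\grad_p$ with $\grad_{\bq}$ introduces the Chern curvature tensor $R$, while substituting the closedness identity above, to convert $\grad_p\eta_{k\bj}$ into $\grad_k\eta_{p\bj}$ plus a torsion piece, produces the contraction $g^{p\bq}T_{kp\bj}\overline{T_{lq\bi}}$. The surviving curvature contribution should equal precisely $R_{i\bj k\bl} - g^{p\bq}T_{kp\bj}\overline{T_{lq\bi}} = Q_{i\bj k\bl}$, with the constant-trace hypothesis absorbing any residual torsion pieces that do not fit into $Q$.

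Diagonalizing $\eta$ at a point in an orthonormal frame, $\eta_{i\bj}=\lambda_i\delta_{ij}$, the curvature contraction in the Bochner identity becomes
\[
\sum_{k,m=1}^n Q_{m\bm k\bk}(\lambda_k^2 - \lambda_k\lambda_m),
\]
which is nonnegative by hypothesis. With careful bookkeeping of the torsion contributions, the leftover gradient term should be identified as $|\grad^B\eta|^2$, using the standard relation between the Chern and Bismut connections on $(1,0)$-vectors. Integrating over $X$ yields an identity of the form $\int_X|\grad^B\eta|^2 + \int_X Q\text{-term} = 0$ with both integrands nonnegative; each must then vanish, so in particular $\grad^B\eta = 0$, proving the first assertion. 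For the Bott-Chern statement, any closed $(1,1)$-representative $\eta$ of a class $\alpha$ can be adjusted to $\eta + \sqrt{-1}\d\db\varphi$ with constant trace by solving $\lapl_\omega\varphi = c - \Tr_\omega\eta$; this is solvable on the compact Hermitian manifold $X$ once $c$ is taken as the mean of $\Tr_\omega\eta$ against the Gauduchon volume form, which spans the cokernel of $\lapl_\omega$. The first part of the theorem then provides the desired Bismut-parallel representative.

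The main obstacle I expect is the precise bookkeeping in the Bochner identity: the torsion terms arising from the closedness condition and from commutators of Chern covariant derivatives must conspire to assemble into the single tensor $Q$, and the leftover gradient term must match $|\grad^B\eta|^2$ rather than the Chern-squared norm plus uncontrolled torsion residue. This combinatorial verification is where the non-K\"ahler generalization of Howard-Smyth-Wu carries genuine content; in the K\"ahler limit all torsion vanishes, $Q$ reduces to $R$, $\grad^B$ coincides with the Chern and Levi-Civita connection, and the argument collapses to the classical orthogonal bisectional curvature proof of Howard-Smyth-Wu.
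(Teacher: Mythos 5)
Your outline follows the paper's proof essentially step for step: the same Bochner identity in which the curvature contribution is the $Q$-quadratic form and the gradient remainder is $|\nabla^+\eta|^2$, the same diagonalization in an orthonormal frame, and the same reduction of a general Bott--Chern representative to the constant-trace case by solving a scalar equation whose solvability is governed by Gauduchon's theorem. Two points need attention, one of substance and one a genuine error. First, essentially all of the content of the theorem is the identity
\begin{equation*}
g^{i\bj}\d_i\d_{\bj}|\eta|^2 \;=\; 2|\nabla^+\eta|^2 \;+\; 2\sum_{k,m=1}^{n}Q_{m\bm k\bk}\bigl(\lambda_k^2-\lambda_k\lambda_m\bigr),
\end{equation*}
and you assert, rather than verify, that the torsion terms assemble in exactly this way. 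In the paper this is carried out by commuting $\nabla_i$ with $\nabla_{\bj}$, substituting the closedness relation to trade $\nabla_i\eta_{k\bj}$ for $\nabla_k\eta_{i\bj}$ plus torsion, and then completing the square $\nabla_k\eta_{i\bm}+T_{ki\bm}\lambda_k$, which is precisely $\nabla^+\eta$ in the diagonalizing frame; this bookkeeping is the theorem, so a proof must actually do it.

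Second, your concluding step ``integrating over $X$ yields $\int_X|\grad^B\eta|^2+\int_X Q\text{-term}=0$'' is not valid as stated. The operator $g^{i\bj}\d_i\d_{\bj}$ is not in divergence form with respect to $\omega^n$ on a general Hermitian manifold: $\int_X g^{i\bj}\d_i\d_{\bj}h\,\omega^n = n\int_X h\,\sqrt{-1}\d\db(\omega^{n-1})$, which vanishes for all $h$ only when $\omega$ is Gauduchon. Either integrate against the Gauduchon weight $e^{(n-1)f}\omega^n$ (where $e^f\omega$ is Gauduchon), or do what the paper does: the right-hand side of the Bochner identity is pointwise nonnegative by $Q$-nonnegativity, so the strong maximum principle on the compact $X$ forces $|\eta|^2$ to be constant, hence both nonnegative terms vanish identically and $\nabla^+\eta=0$. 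Relatedly, in the final reduction the cokernel of the scalar operator $u\mapsto \Tr_\omega(\sqrt{-1}\d\db u)$ is spanned by the positive function $e^{(n-1)f}$ rather than by the Gauduchon volume form itself; with that normalization your choice of the constant $c$ agrees with the paper's. With these repairs your argument coincides with the one given in Section~\ref{sec: Bochner-Kodaira}.
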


Our theorem can be viewed as a non-K\"ahler generalization of a theorem of Howard-Smyth-Wu \cite{HSW}, which says for a compact K\"ahler manifold with non-negative quadratic orthogonal bisectional curvature, any class in $H^{1, 1}(X)$ contains a parallel representative given by the unique harmonic form in that class. This turned out to be a key ingredient of the resolution of the generalized Frankel conjecture by Mok \cite{Mok}. The key to the proof of the theorem is a Bochner-Kodaira type identity for closed $(1, 1)$-form, which generalizes the identity in \cite{HSW}. In the K\"ahler case, such an identity was also used by Mok-Siu-Yau \cite{MSY} to study the solution Poincare-Lelong equation in relation to the uniformization conjecture for non-compact K\"ahler manifolds. We refer the readers to \cite{NiTam1, NiTam2} and the references therein for subsequent developments in that direction. 

\section{Q and the Bismut connection}\label{sec: Q and Bismut}
On a Hermitian manifold, there is another canonical connection called the Bismut connection, this is also sometimes called the Strominger connection or the Strominger-Bismut connection in literature. This connection was first written down by physicists (see \cite{Strom, HuWi}) and Bismut rediscovered and used it in \cite{Bis} to prove an index formula for non-K\"ahler manifolds. This connection has received a lot of attention recently because of its relation to the study of non-K\"ahler Calabi-Yau manifolds, and the Hull-Strominger system arising from physics, see \cite{FeiYau} and \cite{PPZ2} where the Bismut connection is used in an crucial way. It is also a natural connection to in the study of pluriclosed metrics via the geometric flow as defined by Streets and Tian \cite{ST1, ST2}. 

\begin{Definition}[Bismut connection]
	The Bismut connection is the connection on $T^{1, 0}M$ which in local holomorphic coordinates is given by
	\begin{align}
	\nabla^+_iX^j& = \d_iX^j +\Gamma_{li}^jX^l\\
	\nabla^+_{\bi}X^j &= \d_{\bi}X^j+g^{j\bm}\overline{T_{im\bl}}X^l
	\end{align}
	where $\Gamma_{ki}^j = g^{j\bm}\d_kg_{\bm i}$ and $T_{im\bl} = \d_ig_{m\bl}-\d_mg_{i\bl}$. 
\end{Definition}

\begin{Remark}
	One can check that this connection respects both the metric and the complex structure (i.e $\nabla^+ g = \nabla^+J = 0$) and the corresponding Torsion tensor $T^+(X, Y, Z) = g(\nabla^+_XY-\nabla^+_YX-[X, Y], Z)$ is skew-symmetric. These properties uniquely characterizes the Bismut connection. 
\end{Remark}

The curvature of the Bismut connection does not satisfy the same symmetries as the curvature of the Chern connection. Thus in general, the curvature has a decomposition into $(2, 0), (1, 1)$ and $(0, 2)$-parts, and we compute the $(1, 1)$-part of the curvature of the Bismut connection in terms of the curvature and torsion of the Chern connection. 
\begin{Lemma}
	The $(1, 1)$-part of the curvature of the Bismut connection is given by 
	\begin{equation}
	B_{i\bj k\bl}  = R_{k\bj i\bar{l}}-R_{i\bj k\bar{l}}+R_{i\bar{l}k\bj}-T_{ki\bar{\gamma}}\overline{T_{jl}^{\gamma}}-g^{\gamma\bar{\kappa}}T_{i\gamma \bar{l}}\overline{T_{j\kappa \bar{k}}}
	\end{equation}
	where $B$ is the curvature of the Bismut connection and $R$ is the curvature of the Chern connection. 
\end{Lemma}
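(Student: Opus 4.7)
The plan is a direct computation from the definition of the curvature of a linear connection. Using $[\partial_i,\partial_{\bj}]=0$, the Bismut curvature reads
\[
B^p{}_{i\bj k} \;=\; \partial_i(\Gamma^+)^p_{\bj k} - \partial_{\bj}(\Gamma^+)^p_{ik} + (\Gamma^+)^p_{ia}(\Gamma^+)^a_{\bj k} - (\Gamma^+)^p_{\bj a}(\Gamma^+)^a_{ik},
\]
and one extracts $B_{i\bj k\bl}=g_{p\bl}B^p{}_{i\bj k}$. Substituting the Bismut Christoffel symbols $(\Gamma^+)^p_{ik}=\Gamma^p_{ki}$ and $(\Gamma^+)^p_{\bj k}=g^{p\bar m}\overline{T_{jm\bk}}$ from the Definition converts each ingredient into a sum of Chern Christoffel symbols, torsion components, and first derivatives of these.

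The anti-holomorphic derivative term is immediate: by the very definition of Chern curvature, $-g_{p\bl}\partial_{\bj}\Gamma^p_{ki}=R_{k\bj i\bl}$, which accounts for the first term on the right-hand side. For the holomorphic derivative, I would expand $g_{p\bl}\partial_i\bigl(g^{p\bar m}\overline{T_{jm\bk}}\bigr)$ by the product rule, using $\partial_ig^{p\bar m}=-g^{p\bar a}g^{b\bar m}\partial_ig_{b\bar a}$, and writing $\overline{T_{jl\bk}}=\partial_{\bj}g_{k\bl}-\partial_{\bl}g_{k\bj}$. The resulting mixed second derivatives $\partial_i\partial_{\bj}g_{k\bl}-\partial_i\partial_{\bl}g_{k\bj}$ can then be matched against the identity
\[
R_{i\bj k\bl}\;=\;g^{b\bar m}(\partial_{\bj}g_{b\bl})(\partial_ig_{k\bar m}) - \partial_i\partial_{\bj}g_{k\bl},
\]
(a direct unpacking of $R_{i\bj k\bl}=-g_{p\bl}\partial_{\bj}\Gamma^p_{ik}$), and its analogue with $\bj$ and $\bl$ swapped. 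This produces exactly the curvature contributions $-R_{i\bj k\bl}+R_{i\bl k\bj}$, plus lower-order cross terms made from products of $\partial g$ factors.

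To finish, the two quadratic connection terms $g_{p\bl}(\Gamma^+)^p_{ia}(\Gamma^+)^a_{\bj k}$ and $-g_{p\bl}(\Gamma^+)^p_{\bj a}(\Gamma^+)^a_{ik}$ are already Christoffel-torsion products, and together with the cross terms produced in the previous step they should combine, after repeatedly using $T_{ij\bl}=\partial_ig_{j\bl}-\partial_jg_{i\bl}$ to recognize derivatives of the metric as torsion, into precisely the two claimed torsion-squared contributions $-T_{ki\bar\gamma}\overline{T^\gamma_{jl}}$ and $-g^{\gamma\bar\kappa}T_{i\gamma\bl}\overline{T_{j\kappa\bk}}$.

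The main obstacle is purely bookkeeping: there are many terms of similar shape, and one must carefully track index placements, the raise/lower conventions for the two forms of the torsion $T^k_{ij}$ and $T_{ij\bl}$, and the distinction between the torsion and its complex conjugate. Working at a point where $g_{i\bj}(p)=\delta_{ij}$ clears the metric factors from the leading terms but does not eliminate the first derivatives $\partial_kg_{i\bj}$, which carry the torsion information in the non-K\"ahler case, so this simplification is only partial. As a consistency check, in the K\"ahler case the torsion-squared terms vanish and the first Bianchi identity collapses the three curvature contributions to a single copy of $R_{i\bj k\bl}$, recovering the fact that the Bismut and Chern connections agree on K\"ahler manifolds.
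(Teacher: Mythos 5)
Your strategy is correct and is essentially the computation the paper performs: both expand the $(1,1)$-curvature $B_{i\bj m}{}^{k}=\d_i\mathcal{A}_{\bj m}^k-\d_{\bj}\mathcal{A}_{im}^k+\mathcal{A}_{il}^k\mathcal{A}_{\bj m}^l-\mathcal{A}_{\bj l}^k\mathcal{A}_{im}^l$ using the Bismut coefficients $\mathcal{A}_{ik}^l=\Gamma_{ki}^l$ and $\mathcal{A}_{\bj k}^l=g^{l\bq}\overline{T_{jq\bk}}$, with the $-\d_{\bj}\Gamma^p_{ki}$ term immediately yielding $R_{k\bj i\bl}$. The only organizational difference is that the paper packages the holomorphic-derivative and quadratic terms into the Chern covariant derivative $g^{k\bl}\nabla_i\overline{T_{jl\bm}}$ and then uses the identity $g^{k\bl}\nabla_i\overline{T_{jl\bm}}=g^{k\bl}R_{i\bl m\bj}-R_{i\bj m}{}^{k}$ (the conjugated first Bianchi identity for the Chern connection), whereas you expand everything in raw first and second derivatives of $g$ and match against $R_{i\bj k\bl}=g^{b\bar m}(\d_{\bj}g_{b\bl})(\d_ig_{k\bar m})-\d_i\d_{\bj}g_{k\bl}$; both routes involve the same bookkeeping and lead to the stated torsion-squared corrections.
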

\begin{proof}
	The connection coefficients of the Bismut connection is given by
	\[\mathcal{A}_{ik}^l = \Gamma_{ki}^l \]
	\[\mathcal{A}_{\bj k}^l = g^{l\bq}\overline{T_{jq\bar{k}}}\]
	and the $(1, 1)$-part of the curvature is given by
	\begin{align}
	B_{i\bj m}^{\;\;\; \;k} &= \d_i\mathcal{A}_{\bj m}^k-\d_{\bj}\mathcal{A}_{i m}^k+\mathcal{A}_{i l}^k\mathcal{A}_{\bj m}^l-\mathcal{A}_{\bj l}^k\mathcal{A}_{i m}^l\\
	& = g^{k\bl}\nabla_i\overline{T_{j l \bm}}+R_{m\bj i}^{\;\;\;\;\;k}-T_{i l}^k\mathcal{A}_{\bj m}^l+\mathcal{A}_{\bj l}^kT_{i m}^l\\
	& = g^{k\bl}R_{i \bl m\bj}-R_{i \bj m}^{\;\;\;\; k}+R_{m\bj i}^{\;\;\;\;\;k}-g^{l\bn}T_{i l}^k\overline{T_{j n\bm}}+g^{k\bn}\overline{T_{jn\bl}}T_{i m}^l\\
	\end{align}
	lowering the last index gives the result. 
\end{proof}

\begin{Proposition}\label{prop: Q and Strom-Bismut conn}
	The following formula holds
	\begin{equation}
	Q_{i\bj k\bl} = B_{k\bl i \bj}+\d_{\bl}T_{ik\bj}-\d_{\bj}T_{ik\bl}
	\end{equation}
	where $B_{k\bl i\bj}$ is the $(1, 1)$ part of the curvature of the Bismut connection. 
\end{Proposition}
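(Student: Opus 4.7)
The plan is to verify the identity pointwise at an arbitrary point $p$, working in holomorphic normal coordinates centered at $p$ so that $g_{i\bj}(p)=\delta_{ij}$ and $\d g(p)=0$. At such a point all Chern Christoffels vanish, hence $T_{ik\bl}(p) = \d_i g_{k\bl}(p) - \d_k g_{i\bl}(p) = 0$ and $R_{i\bj k\bl}(p) = -\d_i\d_{\bj}g_{k\bl}(p)$. This pointwise verification suffices because both sides of the claimed identity are tensorial: the combination $\d_{\bl}T_{ik\bj}-\d_{\bj}T_{ik\bl}$ is, up to a universal constant, a coefficient of the globally defined $(2,2)$-form $\db\d\omega$.

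The first step is to substitute and relabel. Swapping $i\leftrightarrow k$ and $\bj\leftrightarrow\bl$ in the formula of the preceding lemma yields
\[
B_{k\bl i\bj} = R_{i\bl k\bj}-R_{k\bl i\bj}+R_{k\bj i\bl}-T_{ik\bar\gamma}\overline{T_{lj}^{\gamma}}-g^{\gamma\bar\kappa}T_{k\gamma\bj}\overline{T_{l\kappa\bi}}.
\]
After renaming dummy indices, the last term is identical to the torsion square in the definition of $Q_{i\bj k\bl}$, so the two cancel in $Q-B$. The remaining torsion square $T_{ik\bar\gamma}\overline{T_{lj}^{\gamma}}$ vanishes at $p$ because $T(p)=0$, leaving the pure curvature expression
\[
(Q_{i\bj k\bl}-B_{k\bl i\bj})\big|_p = (R_{i\bj k\bl}-R_{k\bj i\bl})\big|_p + (R_{k\bl i\bj}-R_{i\bl k\bj})\big|_p.
\]

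The second step evaluates each curvature difference using $R_{a\bar b c\bar d}(p) = -\d_a\d_{\bar b}g_{c\bar d}(p)$ together with $T_{ac\bar d}=\d_a g_{c\bar d}-\d_c g_{a\bar d}$, which gives $-\d_{\bj}T_{ik\bl}(p)$ and $-\d_{\bl}T_{ki\bj}(p) = \d_{\bl}T_{ik\bj}(p)$ respectively; summing yields exactly the right-hand side of the proposition. I expect the only real obstacle to be clerical: the two torsion squares in $B_{k\bl i\bj}$ are superficially similar but contract different indices, so some attention is required to verify that it is precisely the $g^{\gamma\bar\kappa}T\overline{T}$ term (rather than the bare $T\overline{T}$ term) that cancels against the torsion square in $Q$.
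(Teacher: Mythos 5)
Your first step is sound: substituting the Lemma's formula for $B_{k\bl i\bj}$ into $Q_{i\bj k\bl}-B_{k\bl i\bj}$ does cancel the $g^{\gamma\bar{\kappa}}T\overline{T}$ term against the torsion square in the definition of $Q$, and this is essentially the same reduction the paper performs. The gap is in the coordinate system you invoke for the second step. For a non-K\"ahler Hermitian metric there are \emph{no} holomorphic coordinates centered at $p$ with $\d g(p)=0$: the torsion $T_{ij\bl}=\d_i g_{j\bl}-\d_j g_{i\bl}$ is a tensor, so if some holomorphic coordinate choice made all of $\d g$ vanish at $p$, then $T(p)$ would vanish in every coordinate system --- which fails at any point where the metric is genuinely non-K\"ahler, i.e.\ precisely the situation this proposition is about. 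The most one can arrange by a holomorphic change of coordinates is to kill the \emph{symmetric} part of $\d g$, so that $\d_i g_{j\bl}(p)=\tfrac12 T_{ij\bl}(p)$; this is the normalization the paper uses.

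This is not a cosmetic point, because it produces two compensating omissions in your computation. You discard the surviving torsion square $T_{ik\bar{\gamma}}\overline{T_{lj}^{\gamma}}$ from $Q-B$ on the grounds that $T(p)=0$, and you use $R_{a\bar{b}c\bar{d}}(p)=-\d_a\d_{\bar{b}}g_{c\bar{d}}(p)$, whereas in the legitimate adapted coordinates one has
\begin{equation*}
R_{a\bar{b}c\bar{d}} \;=\; -\,\d_{\bar{b}}\d_a g_{c\bar{d}} \;+\; \tfrac14\textstyle\sum_{\gamma} T_{ac\bar{\gamma}}\,\overline{T_{bd\bar{\gamma}}},
\end{equation*}
so each of your four curvature terms carries a quadratic torsion correction. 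The total of these corrections equals exactly the torsion square you threw away, which is why your formal bookkeeping appears to close up; but as written the verification is carried out in a coordinate system that does not exist, and both omitted quantities are nonzero at a generic point. The repair is the paper's proof: work in the frame with $g_{i\bj}=\delta_{ij}$ and $\d_i g_{j\bl}=\tfrac12 T_{ij\bl}$, keep the quadratic torsion terms when converting $\d_{\bl}T_{ik\bj}-\d_{\bj}T_{ik\bl}$ into curvatures, and check that the leftover $\sum_{\gamma}T_{ik\bar{\gamma}}\overline{T_{lj\bar{\gamma}}}$ matches the one remaining after your step--one cancellation.
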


\begin{proof}[proof of Proposition~\ref{prop: Q and Strom-Bismut conn}]
	In local coordinates where $g_{i\bj} = \delta_{ij}$ and $\d_ig_{j\bl} = \frac{1}{2}T_{ij\bl}$, we compute
	\begin{align}
	\d_{\bar{l}}T_{ki\bj}-\d_{\bj}T_{ki\bar{l}} &= \d_{\bar{l}}\d_kg_{i\bj}-\d_{\bar{l}}\d_{i}g_{k\bj}-\d_{k}\d_{\bj}g_{i\bar{l}}+\d_{\bj}\d_{i}g_{k\bar{l}}\\
	& = R_{i\bar{l}k\bj}+R_{k\bj i\bar{l}}-R_{i\bj k\bar{l}}-R_{k\bar{l}i\bj}-\sum_{\gamma}T_{ki\bar{\gamma}}\overline{T_{jl\bar{\gamma}}}\\
	& = B_{i\bj k\bar{l}}-Q_{k\bar{l}i\bj}
	\end{align}
\end{proof}

\begin{Remark}
	The extra term $\d_{\bl}T_{ik\bj}-\d_{\bj}T_{ik\bl}$ is the components of the tensor $\d\db\omega$, and the previous lemma implies that if $\omega$ is pluriclosed, then $Q$ is simply the $(1, 1)$ part of the curvature of the Bismut connection. 
\end{Remark}

\section{Q and a Bochner-Kodaira identity}\label{sec: Bochner-Kodaira}
In this section, we prove Theorem~\ref{Main Theorem}. The key is a Bochner-Kodaira type formula for $(1, 1)$-forms, from which the tensor $Q$ naturally emerges. 

In the K\"ahler case, this formula is well-known and is intimately related to the uniformization of manifolds with nonnegative bisectional curvature. It first appeared in \cite{BG}, and is used in \cite{HSW} to deduce a splitting theorem for manifolds with nonnegative bisectional curvature, which was one of the key imput in the solution of the generalized Frankel conjecture by Mok \cite{Mok}. 

\begin{Theorem}
	The following formula holds for $\rho$ a real closed $(1, 1)$-form with $g^{i\bj}\rho_{i\bj}= const$, 
	\begin{equation}\label{eq; bochner-formula}
	g^{i\bj}\d_i\d_{\bj} |\rho|^2 =2|\nabla^+\rho|^2 +2Q_{i\bj k\bl}(g^{i\bj}(\rho^2)^{k\bl}-\rho^{i\bj}\rho^{k\bl})
	\end{equation}
	where $Q_{i\bj k\bl} = R_{i\bj k \bl}-g^{p\bq}T_{kp\bj}\overline{T_{lq\bi}}$. 
\end{Theorem}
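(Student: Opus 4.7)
The plan is to compute $g^{i\bj}\d_i\d_{\bj}|\rho|^2$ entirely in Chern covariant derivatives, reduce all second covariant derivatives of $\rho$ to algebraic expressions in the Chern curvature $R$ and torsion $T$ by using $d\rho=0$ together with the constant-trace hypothesis, and finally convert the resulting Chern derivative norm into the Bismut derivative norm. Since $|\rho|^2$ is a scalar and $\nabla g=0$, we have $g^{i\bj}\d_i\d_{\bj}|\rho|^2 = g^{i\bj}\nabla_i\nabla_{\bj}|\rho|^2$, and Leibniz decomposes this into a quadratic first-derivative piece of the form $2|\nabla\rho|^2$ plus two traces of second covariant derivatives of $\rho$ paired against $\rho$.

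For the second-derivative traces of the form $g^{i\bj}(\nabla_i\nabla_{\bj}\rho_{k\bl})\rho^{k\bl}$, I would first commute $[\nabla_i,\nabla_{\bj}]$, which for the Chern connection yields only curvature terms acting on $\rho$ (no torsion, since the Chern curvature is of pure $(1,1)$-type). On the remaining $g^{i\bj}\nabla_{\bj}\nabla_i\rho_{k\bl}$, use $\d\rho=0$ in the form
\[
\nabla_i\rho_{k\bl} = \nabla_k\rho_{i\bl} - T_{ik}^p\rho_{p\bl}
\]
to swap the free indices, then use $\db\rho=0$ (which on a Hermitian manifold reads $\nabla_{\bj}\rho_{i\bl}=\nabla_{\bl}\rho_{i\bj}$, without torsion) to reduce the surviving Chern-second-derivative piece to $\nabla_k\nabla_{\bl}(g^{i\bj}\rho_{i\bj})=\nabla_k\nabla_{\bl}(c)=0$ via the constant-trace hypothesis, picking up an additional curvature commutator along the way. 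The net effect is that the second-derivative traces reduce to purely algebraic $R\cdot\rho\rho$ terms plus $T\bar T\cdot\rho\rho$ terms arising from the torsion in the $\d\rho=0$ substitution.

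For the first-derivative piece, I would compare the Chern norm $|\nabla\rho|^2$ with the Bismut norm $|\nabla^+\rho|^2$ using the explicit connection difference from the paper's definition of $\nabla^+$. Expanding, $|\nabla^+\rho|^2 - |\nabla\rho|^2$ is a sum of cross terms bilinear in $\nabla\rho$ and $T\cdot\rho$ plus a pure $T\bar T\cdot\rho\rho$ term; the cross terms are killed by the same closedness-plus-trace manipulations as above, and the pure torsion-squared term combines with the $R\cdot\rho\rho$ contributions from the previous step to assemble into the contraction of $Q=R-T\bar T$ against $g^{i\bj}(\rho^2)^{k\bl}-\rho^{i\bj}\rho^{k\bl}$, matching the right-hand side of \eqref{eq; bochner-formula}.

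The main obstacle will be the index bookkeeping in this final assembly step: the $T\bar T\cdot\rho\rho$ terms arise from three independent sources---the $\d\rho=0$ substitution, the Chern-to-Bismut conversion of the derivative norm, and the pairing that turns $R$ into $Q$---and they must conspire to produce exactly the single contraction $g^{p\bq}T_{kp\bj}\overline{T_{lq\bi}}$ from the definition of $Q$, with the right distribution between the $g^{i\bj}(\rho^2)^{k\bl}$ pairing (coming from the second-derivative traces after commutation) and the $\rho^{i\bj}\rho^{k\bl}$ pairing (coming from the Leibniz cross terms and the closedness substitution). Verifying that no stray torsion terms remain is the principal technical challenge.
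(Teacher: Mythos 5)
Your overall strategy is the same as the paper's (Leibniz on $|\rho|^2$, commute Chern derivatives, use $d\rho=0$ and the constant trace to kill the second-derivative trace, then reassemble the gradient and torsion terms into $|\nabla^+\rho|^2$ and the $Q$-contraction), but there is a concrete false step in the middle: the claim that $\db\rho=0$ reads $\nabla_{\bj}\rho_{i\bl}=\nabla_{\bl}\rho_{i\bj}$ \emph{without} torsion. Writing the $(1,2)$-part of $d\rho=0$ in Chern covariant derivatives, the barred index of $\rho_{i\bl}$ is differentiated with the conjugate Christoffel symbols, which are not symmetric unless $g$ is K\"ahler; the correct identity is
\begin{equation*}
\nabla_{\bj}\rho_{i\bl}=\nabla_{\bl}\rho_{i\bj}+\overline{T_{lj}^{\,m}}\,\rho_{i\bm},
\end{equation*}
i.e.\ the exact conjugate of the $(2,1)$-identity you did state with torsion. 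This is not a cosmetic omission: the conjugate-torsion terms it produces are one of the sources of the $\overline{T}\cdot\nabla\rho\cdot\rho$ cross terms and of the $T\overline{T}\rho\rho$ terms, and they are precisely what is needed both to turn the Chern gradient term into the Bismut one (recall $\nabla^+$ differs from $\nabla$ in the $(0,1)$ directions exactly by $g^{j\bm}\overline{T_{im\bl}}$) and to generate the $-g^{p\bq}T_{kp\bj}\overline{T_{lq\bi}}$ correction that distinguishes $Q$ from $R$. If you drop them, the bookkeeping you yourself identify as the main challenge cannot close: you would end up with a mixed Chern/Bismut gradient term and a curvature contraction missing part of its torsion correction.

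A related, smaller inaccuracy: the cross terms in $|\nabla^+\rho|^2-|\nabla\rho|^2$ are not ``killed'' by closedness and the trace condition. In the actual computation (as in the paper) one diagonalizes $\rho$ at a point, and the $T\cdot\nabla\rho\cdot\lambda$ terms coming from the second-derivative traces are \emph{absorbed} by completing the square, $\sum|\nabla_k\rho_{i\bm}+T_{ki\bm}\lambda_k|^2$, which is then identified with $|\nabla^+\rho|^2$; the leftover $|T|^2\lambda_k^2$ and $|T|^2\lambda_k\lambda_m$ pieces are what convert $R_{m\bm k\bk}$ into $Q_{m\bm k\bk}$ against the factor $\lambda_k^2-\lambda_k\lambda_m$. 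Once you restore the torsion term in the $\db$-identity and organize the final step as a completion of the square rather than a cancellation, your outline does reproduce the paper's proof.
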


\begin{proof}
	In local holomorphic coordinates, we have $\rho = i\rho_{k\bj}\,dz^{k}\wedge d\bar{z}^j$ where $\overline{\rho_{k\bj}} = \rho_{j\bk}$. By the closedness of $\rho$, we have
	\begin{equation}
	\nabla_{i}\rho_{k\bj}= \nabla_{k}\rho_{i\bj} + T_{ki}^m\rho_{m\bj}
	\end{equation}
	and
	\begin{equation}
	\nabla_{\bj}\rho_{i\bl} = \nabla_{\bl}\rho_{i\bj} + \overline{T_{lj}^n}\rho_{i\bn}
	\end{equation}
	we compute 
	\begin{align}
	\nabla_i\nabla_{\bj}\rho_{k\bl} &= \nabla_i\nabla_{\bl}\rho_{k\bj}+\nabla_i[\overline{T_{lj}^m}\rho_{k\bm}]\\
	&= \nabla_{\bl}\nabla_i\rho_{k\bj} + [\nabla_i, \nabla_{\bl}]\rho_{k\bj}+\overline{T^{m}_{lj}}\nabla_i\rho_{k\bm}+[\overline{R_{j\bi l}^{\;\;\; m}}-\overline{R_{l\bi j}^{\;\;\; m}}]\rho_{k\bm}\\
	& = \nabla_{\bl}\nabla_{k}\rho_{i\bj}+\nabla_{\bl}[T_{ki}^m\rho_{m\bj}] + [\nabla_i, \nabla_{\bl}]\rho_{k\bj}+\overline{T^{m}_{lj}}\nabla_i\rho_{k\bm}+[\overline{R_{j\bi l}^{\;\;\; m}}-\overline{R_{l\bi j}^{\;\;\; m}}]\rho_{k\bm}\\
	& = \nabla_{\bl}\nabla_{k}\rho_{i\bj}+T_{ki}^m\nabla_{\bl}\rho_{m\bj}+\overline{T_{lj}^m}\nabla_i\rho_{k\bm}+[R_{i\bl k}^{\;\;\; m}-R_{k\bl i}^{\;\;\; m}]\rho_{m\bj}+[\overline{R_{j\bi l}^{\;\;\; m}}-\overline{R_{l\bi j}^{\;\;\; m}}]\rho_{k\bm}\\
	&\qquad - R_{i\bl k}^{\;\;\; m}\rho_{m\bj} + R_{i\bl\;\;\bj}^{\;\;\bn}\rho_{k\bn}\\
	& = \nabla_{\bl}\nabla_{k}\rho_{i\bj}+T_{ki}^m\nabla_{\bl}\rho_{m\bj}+\overline{T_{lj}^m}\nabla_i\rho_{k\bm} -R_{k\bl i}^{\;\;\; m}\rho_{m\bj}+R_{i\bj\;\;\bl}^{\;\;\bm}\rho_{k\bm} 
	\end{align}
	differentiating $|\rho|^2$ once, we get
	\begin{equation}
	\d_{\bj}|\rho|^2 =\nabla_{\bj} (g^{p\bq}g^{k\bl}\rho_{k\bq}\rho_{p\bl})= g^{p\bq}g^{k\bl}(\nabla_{\bj}\rho_{k\bq}\rho_{p\bl}+\nabla_{\bj}\rho_{p\bl}\rho_{k\bq})  = 2g^{p\bq}g^{k\bl}\nabla_{\bj}\rho_{k\bq}\rho_{p\bl}
	\end{equation}
	taking the divergence and using our previous calculation for $\nabla_i\nabla_{\bj}\rho_{k\bq}$ gives
	\begin{align}
	\lapl |\rho|^2 &= 2|\nabla \rho|^2+2g^{p\bq}g^{k\bl}g^{i\bj}\nabla_i\nabla_{\bj}\rho_{k\bq}\rho_{p\bl}\\
	& = 2|\nabla \rho|^2+2\langle\d\db \Tr \rho, \rho\rangle+{2g^{p\bq}g^{k\bl}[g^{i\bj}(T_{ki}^m\nabla_{\bq}\rho_{m\bj}+\overline{T_{qj}^m}\nabla_i\rho_{k\bm})]\rho_{p\bl}}\\
	& \qquad + 2g^{k\bl}\Tr_{12}(R)^{\bm p}\rho_{k\bm}\rho_{p\bl}-2R^{\bl p\bj m}\rho_{m\bj}\rho_{p\bl}
	\end{align}
	
	We now specialize to a coordinate system where $g_{i\bj} = \delta_{i\bj}$ and $\rho_{i\bj} = \lambda_i\delta_{i\bj}$, and use the assumption $g^{i\bj}\rho_{i\bj} = const$, 
	\begin{equation}
	\lapl |\rho|^2 = 2|\nabla_i \rho|^2+ \sum\left(\cdots\right) + 2\sum_{k, m=1}^nR_{m\bm k\bk}(\lambda_k^2-\lambda_k\lambda_m )
	\end{equation}
	where 
	\begin{align}
	\sum\left(\cdots\right) &= 2\sum_{k, m, i=1}^n [T_{ki\bm}\nabla_{\bk}\rho_{m\bi}+\overline{T_{ki\bm}}\nabla_i\rho_{k\bm}]\lambda_k\\
	& = 2\sum_{i, k, m=1}^n[T_{ki\bm}\nabla_{\bk}\rho_{m\bi}+\overline{T_{ki\bm}}\nabla_k\rho_{i\bm}]\lambda_k+2\sum_{i, k, m=1}^n\overline{T_{ki\bm}}T_{ki\bm}\lambda_m\lambda_{k}
	\end{align}
	we can complete the square for the gradient term and we get
	\begin{align}
	\lapl|\rho|^2& = 2\sum_{i, k, m = 1}^n|\nabla_{k}\rho_{i\bm}+T_{ki\bm}\lambda_{k}|^2 +2\sum_{i, k, m=1}^n|T_{ki\bm}|^2\lambda_m\lambda_{k}\\
	&\qquad -2\sum_{k,i,m=1}^n|T_{ki\bm}|^2\lambda_k^2+2\sum_{k, m =1}^n(R_{m\bm k\bk}(\lambda_k^2-\lambda_k\lambda_m))\\
	& =  2\sum_{i, k, m = 1}^n|\nabla_{k}\rho_{i\bm}+T_{ki\bm}\lambda_{k}|^2 - 2\sum_{k, m=1}^n\left(R_{ m\bm k\bk}-\sum_{i=1}^nT_{ki\bm}\overline{T_{ki\bm}}\right)\lambda_k\lambda_m\\
	& \qquad + 2\sum_{k, m=1}^n\left(R_{ m\bm k\bk}-\sum_{i=1}^nT_{ki\bm}\overline{T_{ki\bm}}\right)\lambda_k^2\\
	&  =2\sum_{i, k, m = 1}^n|\d_{i}\rho_{k\bm}-\Gamma_{ki\bm}\lambda_m-T_{ik\bm}\lambda_{k}|^2+2\sum_{k, m=1}^n\left(R_{ m\bm k\bk}-\sum_{i=1}^nT_{ki\bm}\overline{T_{ki\bm}}\right)(\lambda_k^2-\lambda_k\lambda_m)\\
	& = 	2|\nabla^+\rho|^2+2\sum_{k, m=1}^nQ_{m\bm k\bk}(\lambda_k^2-\lambda_k\lambda_m)
	\end{align}
	where $Q_{i\bj k\bl} = R_{i\bj k \bl}-g^{p\bq}T_{kp\bj}\overline{T_{lq\bi}}$. 
\end{proof}

\begin{Corollary}\label{Main Corollary}
	Suppose $(X, J, g)$ is $Q$-nonnegative, then any closed real $(1, 1)$-form $\rho$ with constant trace is parallel with respect to the Bismut connection. Moreover, $T(X, Y, \bar{Z}) = 0$ if $X, Y, \bar{Z}$ are in different eigenspaces of $\rho$ corresponding to eigenvalues $\lambda_X, \lambda_Y, \lambda_Z$ and $\lambda_X+\lambda_Y-\lambda_Z \neq 0$.
\end{Corollary}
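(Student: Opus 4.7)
The plan is to combine the Bochner-Kodaira formula just established with the strong maximum principle to obtain Bismut parallelism, and then extract the torsion selection rule by pairing $\nabla^{+}\rho = 0$ with the closedness of $\rho$.

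Because $(X, J, g)$ is $Q$-nonnegative, in an orthonormal frame diagonalizing $\rho$ the curvature contribution $\sum_{k, m} Q_{m\bar{m}k\bar{k}}(\lambda_{k}^{2} - \lambda_{k}\lambda_{m})$ is pointwise nonnegative, while $|\nabla^{+}\rho|^{2} \geq 0$ always, so the identity of the preceding theorem yields
\[
g^{i\bar{j}}\partial_{i}\partial_{\bar{j}}|\rho|^{2} \geq 0
\]
on all of $X$. The operator $g^{i\bar{j}}\partial_{i}\partial_{\bar{j}}$ is a real, second-order linear elliptic operator with no zeroth-order term, so the Hopf strong maximum principle applies: on the compact connected manifold $X$ the continuous function $|\rho|^{2}$ must attain its maximum and hence is constant. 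Feeding this back into the Bochner identity, both nonnegative summands on the right-hand side vanish identically, so $\nabla^{+}\rho \equiv 0$, which is the claimed Bismut parallelism.

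For the torsion statement I would argue pointwise at a fixed $p \in X$ in holomorphic coordinates simultaneously normalizing $g$ and diagonalizing $\rho$, so that $g_{i\bar{j}}(p) = \delta_{ij}$ and $\rho_{i\bar{j}}(p) = \lambda_{i}\delta_{ij}$. Unpacking the vanishing $\nabla^{+}_{i}\rho_{k\bar{m}}(p) = 0$ via the Bismut connection coefficients from the definition in Section~\ref{sec: Q and Bismut} yields
\[
\partial_{i}\rho_{k\bar{m}}(p) = \Gamma_{ki\bar{m}}(p)\,\lambda_{m} + T_{ik\bar{m}}(p)\,\lambda_{k},
\]
together with the same formula after swapping $i$ and $k$. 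Subtracting the two expressions and invoking $\partial_{i}\rho_{k\bar{m}} - \partial_{k}\rho_{i\bar{m}} = 0$ (the $\partial$-closedness of $\rho$), together with $\Gamma_{ki\bar{m}} - \Gamma_{ik\bar{m}} = T_{ki\bar{m}}$ and the antisymmetry $T_{ik\bar{m}} = -T_{ki\bar{m}}$, one arrives at
\[
T_{ki\bar{m}}(p)\,(\lambda_{i} + \lambda_{k} - \lambda_{m}) = 0,
\]
which is exactly the claimed selection rule on the torsion.

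The main subtlety is the maximum-principle step: on a non-K\"ahler Hermitian manifold $g^{i\bar{j}}\partial_{i}\partial_{\bar{j}}$ is not formally self-adjoint with respect to $\omega^{n}/n!$, so the naive integrated identity $\int_{X} g^{i\bar{j}}\partial_{i}\partial_{\bar{j}}|\rho|^{2}\,\omega^{n} = 0$ fails in general, picking up torsion-type terms involving $d\omega \wedge \omega^{n-2}$. This is precisely the reason to invoke the pointwise Hopf strong maximum principle rather than an $L^{2}$-based energy argument; once constancy of $|\rho|^{2}$ is secured, the rest of the argument is a local algebraic manipulation in the chosen frame.
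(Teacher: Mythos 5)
Your proposal is correct and follows essentially the same route as the paper: the Bochner--Kodaira identity plus $Q$-nonnegativity gives $g^{i\bar{j}}\partial_i\partial_{\bar{j}}|\rho|^2 \geq 2|\nabla^+\rho|^2 \geq 0$, the (strong) maximum principle on the compact manifold forces $|\rho|^2$ to be constant and hence $\nabla^+\rho = 0$, and then combining $\nabla^+\rho=0$ with $d\rho=0$ in a frame where $g_{i\bar{j}}=\delta_{ij}$ and $\rho_{i\bar{j}}=\lambda_i\delta_{ij}$ yields $T_{ki\bar{m}}(\lambda_k+\lambda_i-\lambda_m)=0$. Your explicit unpacking of the Bismut connection coefficients and the remark on why a pointwise maximum principle (rather than integration by parts) is the right tool are accurate elaborations of the paper's terser argument, not a different method.
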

\begin{proof}
	If $(M, J, g)$ is $Q$-nonegative, then we have
	\[\lapl |\rho|^2 \geq 2|\nabla^{+}\rho|^2\geq 0\]
	and by the maximum principle, we must have $|\rho|^2 = const$ which implies $\lapl |\rho|^2 =0$ so the right hand side of equation~\ref{eq; bochner-formula} must be identically 0, hence we have $\nabla^+\rho = 0$. If we work in coordinates where $g_{i\bj} = \delta_{ij}$ and $\rho_{i\bj} =\lambda_i\delta_{ij}$, then $\nabla^+\rho = 0$ and $\rho$ being closed implies that
	\begin{equation}
		T_{ki\bm}(\lambda_k+\lambda_i-\lambda_m) = 0
	\end{equation}
	from which the second part of the proposition follows. 
\end{proof}

\begin{proof}[Proof of Theorem \ref{Main Theorem}]
	The first statement follows by Corollary~\ref{Main Corollary}, we will prove the second statement that every class in $H^{1, 1}_{BC}(X)$ contains a Bismut parallel representative. Recall that 
	\[H^{1, 1}_{BC}(X) : = \frac{\{\alpha \in \Omega^{1, 1}| d\alpha = 0\}}{i\d\db C^{\infty}(X, \C)},\]
	hence this statement amounts to showing that for any closed $(1, 1)$-form $\alpha$, there exist a smooth function $u$ such that $\nabla^+(\alpha+i\d\db u) = 0$. Let $\alpha$ be a real closed $(1, 1)$-form. In \cite{Gau}, Gauduchon proved that every Hermitian metric is conformally equivalent to a Gauduchon metric, so let $\hat{\omega} =e^f\omega$ be a Gauduchon metric in the conformal class of $\omega$. By the Gauduchon property, we know the equation $i\d\db u\wedge\hat{\omega}^{n-1}= f\hat{\omega}^n$ has a solution iff $\int_X f \hat{\omega}^n = 0$. Let $u$ solve the equation
	\begin{equation}
		 n\frac{\sqrt{-1}\d\db u \wedge \hat{\omega}^{n-1}}{\hat{\omega}^n} = \Tr_{\hat{\omega}}\alpha+ce^{-f}
	\end{equation}
	where $c$ is the constant given by 
	\[c = -\frac{\int_X (\Tr_{\hat{\omega}}\alpha) \hat{\omega}^n}{\int_X e^{-f}\hat{\omega}^n}\]
	Setting $\rho = \alpha-\sqrt{-1}\d\db u $, then $d\rho = 0$ and $\Tr_{\omega}\rho = \Tr_{\omega}\alpha-\lapl_{\omega} u = c$, hence by Corollary \ref{Main Corollary}, we have $\nabla^+\rho = 0$. If $\alpha$ is not real then we can write $\alpha = u+iv$ where $u, v$ are the real and imaginary parts of $\alpha$, then $u, v$ are both real closed $(1, 1)$ forms and we can apply the above argument to both $u$ and $v$. 
\end{proof}

\begin{Corollary}
	If  $(X, J, g)$ is non-K\"ahler, $Q$-nonnegative and $H^{1, 1}_{BC}\neq 0$. Then the holonomy of $\nabla^+$ is contained in a subspace $U(m)\times U(n-m)\subset U(n)$. 
\end{Corollary}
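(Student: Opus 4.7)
The plan is to produce a nontrivial $\nabla^+$-parallel $g$-orthogonal decomposition of $T^{1,0}X$ and read off the resulting holonomy reduction. First I would use the hypothesis $H^{1,1}_{BC}(X) \neq 0$ to pick a nonzero real class (if the chosen class is complex, at least one of its real or imaginary parts is nonzero, since the Bott--Chern cohomology admits a real structure). By Theorem~\ref{Main Theorem} applied to this real class, there is a closed real $(1,1)$-form $\rho$ representing it with $\nabla^+\rho=0$.

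Next I would view $\rho$ as a Hermitian endomorphism of $T^{1,0}X$ via $\rho^i_{\ j}=g^{i\bar k}\rho_{j\bar k}$. Since $\nabla^+ g = 0$, $\nabla^+ J = 0$, and $\nabla^+\rho=0$, this endomorphism is Bismut-parallel; hence its eigenvalues are globally constant functions on $X$, and each of its eigenspaces is a $\nabla^+$-parallel $g$-orthogonal subbundle of $T^{1,0}X$.

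The key step, and the main obstacle, is to exclude the degenerate case in which $\rho$ has a single eigenvalue. If $\rho$ had a single eigenvalue $\lambda$, then $\rho = \lambda \omega$. Closedness of $\rho$ would then force $\lambda\, d\omega = 0$: either $\lambda=0$, which would make $\rho\equiv 0$ and contradict the nontriviality of the chosen Bott--Chern class, or $\lambda\neq 0$, in which case $d\omega=0$ contradicts the non-K\"ahler hypothesis. This is the only place where non-K\"ahlerness enters, and it is the step that must be handled cleanly.

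Once at least two distinct eigenvalues are guaranteed, I would group the eigenspaces into two nonempty orthogonal summands $T^{1,0}X = E_+\oplus E_-$ of complex ranks $m$ and $n-m$ with $1\leq m\leq n-1$ (for instance, by separating the eigenvalues by any threshold value that falls strictly between two of them). Because $\nabla^+$ preserves this decomposition and the induced Hermitian structure on each summand, the holonomy of $\nabla^+$ is contained in $U(m)\times U(n-m)\subset U(n)$, which is the desired conclusion.
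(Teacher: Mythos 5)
Your argument is correct and takes essentially the same route as the paper: apply Theorem~\ref{Main Theorem} to a nonzero class to obtain a Bismut-parallel closed real $(1,1)$-form $\rho$, use non-K\"ahlerness (via $d\rho=0$) to rule out $\rho$ being a multiple of $\omega$, and reduce the holonomy through the parallel, $g$-orthogonal eigenbundle decomposition of $T^{1,0}X$. Your write-up merely spells out details the paper's proof leaves implicit, such as the constancy of the eigenvalues and the exclusion of the single-eigenvalue case.
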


\begin{proof}
	Since $H^{1, 1}_{BC}\neq 0$, by Theorem~\ref{Main Theorem}, there exist a class $0\neq [\rho]\in H^{1, 1}_{BC}$ containing a Bismut flat representative $\rho$, and furthermore, since $X$ is non-K\"ahler, $\rho$ is not a multiple of $g$. Hence if $\lambda_1, \ldots, \lambda_n$ are the eigenvalues of $\rho$ with respect to $g$, then there exist at least two distinct eigenvalues, and since both $g$ and $\rho$ is flat with respect to the Bismut connection, it follows that the eigenspaces of $\rho$ are invariant under the holonomy $\nabla^+$, and we have our result. 
\end{proof}

\section{Q and Vaisman manifolds}\label{sec: Vaisman manifolds}
In this section, we study this positivity condition on a class of Locally conformally K\"ahler manifolds called Vaisman manifolds. These manifolds were first introduced by Vaisman \cite{Vaisman} as an important class of complex non-K\"ahler manifold which he called generalized Hopf manifolds. By \cite{OV: Vaisman, OV: LCK-Rank}, a large class of these manifolds can be viewed as a Sasakian manifolds equipped with a Sasakian automorphism. For more on these manifolds, we refer the reader to \cite{OV: Vaisman, OV: LCK-Rank} and the references therein. 

\begin{Definition}\label{def: LCK/Vaisman manifolds}
	\begin{enumerate}
		\item[(1)] A compact Hermitian manifold $(M, J, g)$ is called {\em locally conformally K\"ahler} if there exist an closed form $\theta$ such that
	\begin{equation}
		d\omega = \theta\wedge\omega
	\end{equation}
	The 1-form $\theta$ is called the {\em Lee form}. 
	\item[(2)] A locally conformally K\"ahler manifold is a {\em Vaisman manifold} if  $\nabla^{LC} \theta = 0$, where $\nabla^{LC}$ is the Levi-Civita connection of $(M, J, g)$. 
	\end{enumerate}
\end{Definition}
\begin{Remark}
A locally conformally K\"ahler metric can always locally be written as a K\"ahler metric times a conformal factor, and the coverse is clearly true as well. Indeed, since the Lee form $\theta$ is closed, by the Poincare lemma, it is locally exact, hence locally, we can always write $\theta = -df$ for some locally defined functin $f$, then $e^f\omega$ is a K\"ahler metric since $d(e^f\omega) = e^f df\wedge\omega + e^f d\omega = e^f(\theta+df)\wedge\omega=0$. Globally, any LCK manifold admits a K\"ahler covering. Indeed, on any cover $\pi:\tilde{M}\to M$ such that $H^1(\tilde{M}, \R) = 0$, the Lee form on the cover $\pi^{\star}\theta$ is globally exact, and the same argument says that $\pi^{\star}\omega$ is globally conformal to a K\"ahler metric. 
\end{Remark}

From this discussion, we see that a locally conformally K\"ahler metric can locally be described by two functions, a K\"ahler potential for the K\"ahler metric $\varphi$ and a conformal factor $e^f$. On a Vaisman manifolds, if $\theta$ is exact and $\theta = -df$, then in \cite{Ver} Verbitsky showed that the metric $\omega$ can be expressed by $ e^{-f}i\d\db e^{f}$. Hence up to a cover, a Vaisman metric can be described by the data of a single potential function $\varphi$ and the metric is given by $\omega = \varphi^{-1}i\d\db\varphi$. Motivated by this, Ornea and Verbitsky introduced a more general class of metrics called LCK metrics with potential in \cite{OV: LCK+P}. 

\begin{Definition}\label{def: LCK metric with potential}
	A locally conformally K\"ahler manifold $(M, J, g)$ has a potential if it has a K\"ahler covering $\pi: \tilde{M}\to M$ with a global positive K\"ahler potential $\varphi\in C^{\infty}_{>0}(\tilde{M})$ such that $\varphi^{-1}i\d\db\varphi = c\pi^{\star}\omega$ for some constant $c$. 
\end{Definition}

It turns out that on this class of manifolds, the $Q$-tensor enjoys an extra symmetry that the Chern curvature does not have. 

\begin{Proposition}\label{prop: LCK with potential are Q-symmetric}
	Suppose a Hermitian manifold $(M, J, g)$ is an LCK manifold with potential, then $Q$ has the additional symmetry $Q_{i\bj k\bl} = Q_{k\bl i \bj}$. 
\end{Proposition}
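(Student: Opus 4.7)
The plan is to pass to the K\"ahler covering and reduce the claimed symmetry of $Q$ to a single scalar identity satisfied by the LCK potential. Since the assertion $Q_{i\bj k\bl}=Q_{k\bl i\bj}$ is tensorial and local, it suffices to work on the covering $\pi\colon\tilde M\to M$ furnished by Definition~\ref{def: LCK metric with potential}, on which $\pi^{\star}\omega=\varphi^{-1}i\d\db\varphi$. Setting $f=-\log\varphi$ and $\tilde g_{i\bj}=\varphi_{i\bj}$, one has $g=e^{f}\tilde g$ with $\tilde g$ K\"ahler, so the problem becomes a conformal-change computation starting from a K\"ahler metric.

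Next I would record the standard conformal-change formulas. Because $\tilde g$ is K\"ahler, the Chern connection of $g$ satisfies $\Gamma_{ki}^{j}=f_{k}\delta_{i}^{j}+\tilde\Gamma_{ki}^{j}$, hence the Chern torsion of $g$ is
\[
T_{ki\bj}=f_{k}g_{i\bj}-f_{i}g_{k\bj},
\]
and the Chern curvature is
\[
R_{i\bj k\bl}=-f_{i\bj}\,g_{k\bl}+e^{f}\tilde R_{i\bj k\bl}.
\]
Expanding $g^{p\bq}T_{kp\bj}\overline{T_{lq\bi}}$ with the formula for $T$ gives a sum of four terms; two are automatically symmetric under the swap $(i\bj)\leftrightarrow(k\bl)$, and the remaining antisymmetric part is exactly $f_{k}f_{\bl}g_{i\bj}-f_{i}f_{\bj}g_{k\bl}$.

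Combining these pieces, and using that the K\"ahler curvature $\tilde R$ is already symmetric in $(i\bj)\leftrightarrow(k\bl)$, the antisymmetrization of $Q$ collapses to
\[
Q_{i\bj k\bl}-Q_{k\bl i\bj}=\bigl(f_{k\bl}-f_{k}f_{\bl}\bigr)g_{i\bj}-\bigl(f_{i\bj}-f_{i}f_{\bj}\bigr)g_{k\bl}.
\]
The decisive identity is that on an LCK manifold with potential,
\[
f_{i\bj}-f_{i}f_{\bj}=-\varphi^{-1}\varphi_{i\bj}=-g_{i\bj},
\]
which follows at once from $f=-\log\varphi$ together with the defining relation $g_{i\bj}=\varphi^{-1}\varphi_{i\bj}$. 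Substituting, both brackets become $-g_{i\bj}$ and $-g_{k\bl}$ respectively, and the right-hand side vanishes.

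The main obstacle is essentially organizational: one must expand the torsion product carefully, isolate the antisymmetric piece, and recognize that it is designed to absorb exactly the antisymmetric part produced by the conformal change of $R$. Once this bookkeeping is done, it is the particular relation between $\varphi$ and $\omega$ that distinguishes an LCK metric with potential from an arbitrary conformally-K\"ahler metric which makes the cancellation happen; for a generic conformal change no such symmetry would hold.
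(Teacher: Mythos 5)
Your proof is correct and follows essentially the same route as the paper: both pass to the K\"ahler cover, apply the conformal-change formulas for the Chern curvature and torsion with $f=-\log\varphi$, and then use the potential relation $\varphi_{i\bj}=c\varphi\, g_{i\bj}$ (your identity $f_{i\bj}-f_if_{\bj}=-\varphi^{-1}\varphi_{i\bj}$ holds up to the constant $c$, which cancels in the antisymmetrized expression anyway). The only difference is presentational — you antisymmetrize $Q$ first and show the antisymmetric part vanishes, whereas the paper writes out the full expression for $Q$ and reads off the symmetry term by term.
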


To prove this, first we need to compute the change of the $Q$ tensor under a conformal change
\begin{Lemma}\label{lem: conformal change}
	If $g$ is a K\"ahler metric and $\tilde{g} = e^{f}g$, then we have
	\begin{equation}
	\tilde{Q}_{i\bj k\bl} = e^{f}\left[R_{i\bj k\bl}-g_{k\bl}f_{i\bj}-f_kf_{\bl}g_{i\bj}-|\d f|^2g_{k\bj}g_{i\bl}+f_kf_{\bj}g_{i\bl}+f_if_{\bl}g_{k\bj}\right]
	\end{equation}
\end{Lemma}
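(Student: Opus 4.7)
The plan is to compute the two pieces that make up $\tilde{Q}_{i\bj k\bl}$ separately under the conformal change $\tilde{g} = e^f g$, taking advantage of the fact that since $g$ is K\"ahler its Chern torsion vanishes, so every torsion contribution for $\tilde{g}$ comes purely from the conformal factor.

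First I would compute the torsion. From $\tilde{T}_{ij\bl} = \d_i \tilde{g}_{j\bl} - \d_j \tilde{g}_{i\bl}$ and the product rule, using $T_{ij\bl}(g) = 0$, one obtains $\tilde{T}_{ij\bl} = e^f(f_i g_{j\bl} - f_j g_{i\bl})$. Plugging this together with $\tilde{g}^{p\bq} = e^{-f} g^{p\bq}$ into $\tilde{g}^{p\bq}\tilde{T}_{kp\bj}\overline{\tilde{T}_{lq\bi}}$ produces four terms that are contracted using the standard identities $g^{p\bq}g_{p\bj}g_{i\bq} = g_{i\bj}$, $f_p g^{p\bq} g_{i\bq} = f_i$, and $g^{p\bq}f_p f_{\bq} = |\d f|^2$, giving
\[\tilde{g}^{p\bq}\tilde{T}_{kp\bj}\overline{\tilde{T}_{lq\bi}} = e^f\bigl(f_k f_{\bl} g_{i\bj} - f_k f_{\bj} g_{i\bl} - f_i f_{\bl} g_{k\bj} + |\d f|^2 g_{k\bj} g_{i\bl}\bigr).\]

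Next I would compute the Chern curvature of $\tilde{g}$. The connection coefficients transform by $\tilde{\Gamma}_{ik}^l = \Gamma_{ik}^l + f_i \delta^l_k$, since $\tilde{g}^{l\bm}\d_i\tilde{g}_{k\bm} = e^{-f} g^{l\bm}(e^f f_i g_{k\bm} + e^f \d_i g_{k\bm})$. Differentiating in $\d_{\bj}$ yields $\tilde{R}_{i\bj k}^{\;\;\;\;l} = R_{i\bj k}^{\;\;\;\;l} - f_{i\bj}\delta^l_k$, and lowering the index with $\tilde{g}_{l\bn} = e^f g_{l\bn}$ gives $\tilde{R}_{i\bj k\bl} = e^f(R_{i\bj k\bl} - g_{k\bl} f_{i\bj})$.

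Finally, assembling $\tilde{Q}_{i\bj k\bl} = \tilde{R}_{i\bj k\bl} - \tilde{g}^{p\bq}\tilde{T}_{kp\bj}\overline{\tilde{T}_{lq\bi}}$ reproduces the claimed formula, noting the sign flip on the torsion piece. The only real obstacle is bookkeeping in the middle step: one must be careful distinguishing contractions that collapse to a single $f_i$ or $f_{\bj}$ from those that collapse to $|\d f|^2$, and to make sure the barred and unbarred indices land in the right slots after invoking $\overline{\tilde{T}_{lq\bi}}$. No conceptual subtlety arises beyond this.
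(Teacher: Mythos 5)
Your proposal is correct and follows essentially the same route as the paper: compute $\tilde{\Gamma}_{ij}^k = \Gamma_{ij}^k + f_i\delta_j^k$, deduce $\tilde{T}_{ij\bl} = e^f(f_ig_{j\bl}-f_jg_{i\bl})$ and $\tilde{R}_{i\bj k\bl} = e^f(R_{i\bj k\bl}-f_{i\bj}g_{k\bl})$, then substitute into the definition of $Q$. Your explicit expansion of the contraction $\tilde{g}^{p\bq}\tilde{T}_{kp\bj}\overline{\tilde{T}_{lq\bi}}$ is accurate and in fact spells out the one step the paper leaves implicit.
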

\begin{proof}
	By straightforward computation, we have
	\[\tilde{\Gamma}_{ij}^k =  \tilde{g}^{k\bl}\d_i\tilde{g}_{j\bl} = e^{f}g^{k\bl}\d_i(e^{f}g_{j\bl}) = \Gamma_{ij}^k+f_i\delta_{j}^k\]
	since we assumed $g$ is K\"ahler, we have $\Gamma_{ij}^k = \Gamma_{ji}^k$, so
	\[\tilde{T}_{ij}^k =f_i\delta_{j}^k-f_j\delta_{i}^k\implies \tilde{T}_{ij\bk} = e^{f}(f_ig_{j\bk}-f_jg_{i\bk}) \]
	and 
	\[\tilde{R}_{i\bj k}^{\;\;\;\;m} = -\d_{\bj}\tilde{\Gamma}_{ik}^m = -\d_{\bj}(\Gamma_{ik}^m+f_i\delta_{k}^m) = {R}_{i\bj k}^{\;\;\;\;m} -f_{i\bj}\delta_{k}^m\]
	loweing indices gives
	\[\tilde{R}_{i\bj k\bl} = e^{f}({R}_{i\bj k\bl}-f_{i\bj}g_{k\bl})\]
	substituting into the formula $\tilde{Q}_{i\bj k\bl} = \tilde{R}_{i\bj k\bl}-\tilde{g}^{p\bq}\tilde{T}_{kp\bj}\tilde{\overline{T}}_{lq\bi}$ the result. 
\end{proof}

\begin{proof}[proof of Proposition~\ref{prop: LCK with potential are Q-symmetric}]
	Suppose $g$ satisfies $g = c\varphi^{-1} d d^{c}\varphi$, then by Lemma~\ref{lem: conformal change} with $g = i\d\db \varphi$ and $f = -\log \varphi+\log c$, we get that $Q$ is given by
	\begin{equation}\label{eq: Q under conformal change}
	Q_{i\bj k\bl} = c\varphi^{-1}\left[R^{i\d\db\varphi}_{i\bj k\bl}+\frac{\varphi_{k\bl}\varphi_{i\bj}}{\varphi}-\frac{\varphi_{k\bl}\varphi_i \varphi_{\bj}}{\varphi^2}-\frac{\varphi_{i\bj}\varphi_k\varphi_{\bl}}{\varphi^2}-|\d \log \varphi|^2_{i\d\db\varphi}\varphi_{k\bj}\varphi_{i\bl}+\frac{\varphi_{i\bl}\varphi_k\varphi_{\bj}}{\varphi^2}+\frac{\varphi_{k\bj}\varphi_i\varphi_{\bl}}{\varphi^2}\right]
	\end{equation}
	from which we can read off the symmetry $Q_{i\bj k\bl} = Q_{k\bl i\bj}$. 
\end{proof}

All Vaiman manifolds are LCK with potential, and the Vaisman manifolds are charaterized by the following condition. 
\begin{Theorem}[\cite{OV: Hopf}]\label{thm: Ornea-Verbisky}
	A compact LCK manifold with potential $(M, J, g)$ is Vaisman if and only if $|\theta| = const$. 
\end{Theorem}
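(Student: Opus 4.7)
The forward implication is immediate: if $\nabla^{LC}\theta = 0$, then because $\nabla^{LC}$ is metric compatible, $d|\theta|^2 = 2(\nabla^{LC}\theta,\theta) = 0$, so $|\theta|$ is constant. The content lies in the converse, which I plan to prove by lifting to the K\"ahler covering, converting the hypothesis into an explicit PDE for the K\"ahler potential, and then reading off the parallelism of $\theta$ from a cylindrical structure that this PDE forces on the cover.

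Let $\pi:\tilde M\to M$ be the K\"ahler covering with global positive potential $\varphi$, so that $\pi^{\star}\omega = c^{-1}\varphi^{-1}\tilde\omega$ with $\tilde\omega = i\d\db\varphi$. From $d(\pi^{\star}\omega) = \pi^{\star}\theta \wedge \pi^{\star}\omega$ one reads off $\pi^{\star}\theta = -d\log\varphi$, and a direct conformal computation yields
\[ |\theta|^2_g \;=\; c\,\varphi^{-1}\,|d\varphi|^2_{\tilde g}. \]
Hence the hypothesis $|\theta|^2 = \mathrm{const}$ is equivalent to the pointwise relation
\[ |d\varphi|^2_{\tilde g} \;=\; C\,\varphi \qquad \text{on }\tilde M \]
for a positive constant $C$. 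This is precisely the PDE characterizing K\"ahler cones.

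Next I would extract the cone structure. Setting $r := \sqrt{4\varphi/C}$, so that $\varphi = \tfrac{C}{4}r^2$, the PDE becomes $|dr|^2_{\tilde g} = 1$, and combined with $\tilde\omega = \tfrac{C}{4}i\d\db(r^2)$ this should force the global splitting $\tilde g = dr^2 + r^2 h$ for a fixed Riemannian metric $h$ on the link $L := \{r = 1\}$. Concretely, the real gradient $\xi := \mathrm{grad}_{\tilde g}r$ is unit-length, its flow preserves the distribution $\ker dr$, and the Hessian of $r$ satisfies the cone identity $\mathrm{Hess}_{\tilde g}(r) = r^{-1}(\tilde g - dr\otimes dr)$, which one checks by differentiating the PDE and using the K\"ahler identity $\varphi_{i\bj} = \tilde g_{i\bj}$. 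Substituting back into $\pi^{\star}g$ yields
\[ \pi^{\star}g \;=\; c^{-1}\varphi^{-1}\tilde g \;=\; \tfrac{4}{cC}\bigl(ds^2 + h\bigr), \qquad \pi^{\star}\theta \;=\; -2\,ds, \qquad s := \log r, \]
a product metric in which $\pi^{\star}\theta$ is manifestly parallel. Descending along $\pi$ gives $\nabla^{LC}\theta = 0$ on $M$, i.e., $M$ is Vaisman.

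The hard part will be the rigidity step above --- rigorously extracting the cone splitting globally from $|dr|^2_{\tilde g} = 1$ together with $\tilde\omega = \tfrac{C}{4}i\d\db(r^2)$. My plan is to derive the Hessian identity by differentiating the PDE $|d\varphi|^2_{\tilde g} = C\varphi$ and exploiting that $\varphi_{i\bj}=\tilde g_{i\bj}$; the resulting formula produces the cone form locally, and globality then follows from the observation that $M$ is compact (so $\pi^{\star}g$ is complete), which under the coordinate change $s = \log r$ forces the cylindrical structure to be defined on the whole of $\tilde M$ with connected level sets. Once this is in hand the parallelism of $\pi^{\star}\theta$ is a routine unwinding of conformal factors.
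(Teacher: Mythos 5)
This theorem is quoted from Ornea--Verbitsky \cite{OV: Hopf}; the paper gives no proof of its own, so your proposal has to stand on its own. The forward direction and the bookkeeping on the cover are fine: $\pi^{\star}\theta = -d\log\varphi$, the conformal identity $|\theta|^2_g = c\,\varphi^{-1}|d\varphi|^2_{\tilde g}$, the reduction of $|\theta|=\mathrm{const}$ to the cone equation $|d\varphi|^2_{\tilde g}=C\varphi$, and the final descent of parallelism once the splitting $\tilde g = dr^2+r^2h$ is available are all correct.

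The gap is exactly at the step you flag as hard, and it is a genuine one: the cone identity $\mathrm{Hess}_{\tilde g}(r)=r^{-1}(\tilde g - dr\otimes dr)$ does \emph{not} follow from differentiating $|d\varphi|^2_{\tilde g}=C\varphi$ together with $\varphi_{i\bj}=\tilde g_{i\bj}$. The real Hessian of $\varphi$ splits into its $J$-invariant part, which is the $(1,1)$-part and is indeed forced to equal $\tilde g$ by $i\d\db\varphi=\tilde\omega$, and its $J$-anti-invariant part $\nabla_i\nabla_j\varphi$ (plus its conjugate), which must vanish for the cone structure. Differentiating the scalar PDE only yields
\begin{equation*}
\tilde g^{i\bj}\varphi_{\bj}\,\nabla_k\nabla_i\varphi = (C-1)\,\varphi_k ,
\end{equation*}
i.e.\ it controls the contraction of the $(2,0)$-Hessian against $\db\varphi$ and nothing more; on the orthogonal complement of $\mathrm{span}(\nabla r, J\nabla r)$ the $(2,0)$-part is completely unconstrained by your two pieces of data. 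Equivalently, what you need is that the Euler field $V^i=\tilde g^{i\bj}\varphi_{\bj}$ is holomorphic, i.e.\ $\nabla_{\bk}\nabla_{\bj}\varphi=0$, and this is precisely the nontrivial content of the theorem: a single scalar equation plus the $(1,1)$-Hessian cannot determine the full Hessian pointwise. The actual proof requires global input --- compactness of $M$ and the $\mathbb{Z}$-action on $(\tilde M,\tilde\omega,\varphi)$ by holomorphic homotheties (together with the structure of the closure of the group this generates, which is what makes the Lee field Killing). To repair your argument you would need to inject that input, for instance via an integration or Bochner-type argument on the compact quotient showing that the $(2,0)$-Hessian of $\varphi$ vanishes, rather than relying on the local computation.
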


\begin{Remark}
	Let $(M, J, g)$ be LCK with potential, then if we pull-back the metric $g$ to its K\"ahler cover and write $\pi^{\star}\omega = c\frac{i\d\db\varphi}{\varphi}$ for some potential $\varphi$, then the condition $|\theta| = const$ is equivalent to $|\d\log \varphi|_{\varphi^{-1}i\d\db\varphi}^2 = 1$. 
\end{Remark}

\begin{Theorem}\label{thm: Vaisman manifold and Q-positivity}
	A Vaisman manifold $(M, J, g)$ is Q-nonnegative if the corresponding K\"ahler metric $\tilde{\omega}$ on its K\"ahler cyclic cover has nonnegative quadratic orthogonal bisectional curvature.
\end{Theorem}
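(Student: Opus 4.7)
The approach is to exploit the explicit formula for $Q$ on LCK-with-potential manifolds established in the proof of Proposition~\ref{prop: LCK with potential are Q-symmetric}, namely equation~\eqref{eq: Q under conformal change}, combined with the Vaisman identity $|\d\log\varphi|^2_g = 1$ from the Remark following Theorem~\ref{thm: Ornea-Verbisky}. Since $Q$-nonnegativity is a pointwise condition and the covering $\pi:\tilde M\to M$ is a local isometry, it suffices to verify the inequality at an arbitrary point $p$ of the K\"ahler cyclic cover $\tilde M$. Moreover, by Proposition~\ref{prop: LCK with potential are Q-symmetric}, $Q$ satisfies $Q_{i\bj k\bl}=Q_{k\bl i\bj}$, so by the Remark after Definition~1.1 the desired inequality is equivalent to
\[ \sum_{k,m=1}^n Q_{m\bm k\bk}(\lambda_k-\lambda_m)^2 \ge 0 \]
for every $g$-unitary frame at $p$ and every $\lambda_1,\dots,\lambda_n\in\R$.

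My plan is to evaluate this sum in a frame adapted to $\varphi$. Normalize the cover so that $g = \varphi^{-1}i\d\db\varphi$, pick a $g$-unitary frame $\{e_i\}$ at $p$, and apply a $U(n)$-rotation so that the complex gradient of $\varphi$ is aligned with $e_1$, i.e.\ $\varphi_i=0$ for $i>1$ and $\varphi_1\ge 0$. The Vaisman identity then yields $\varphi_1=\varphi$, and $i\d\db\varphi=\varphi\,g$ gives $\varphi_{i\bj}=\varphi\,\delta_{ij}$; the same Vaisman identity also reads $|\d\log\varphi|^2_{i\d\db\varphi}=1/\varphi$. Substituting these values into~\eqref{eq: Q under conformal change} is a direct computation: the Hessian and gradient pieces largely cancel, leaving (for $m\ne k$)
\[ Q_{m\bm k\bk} \;=\; \frac{\tilde R_{m\bm k\bk}}{\varphi} \;+\; 1 \;-\; \frac{|\varphi_m|^2+|\varphi_k|^2}{\varphi^2}. \]
Summing against $(\lambda_k-\lambda_m)^2$ and using $|\varphi_j|^2=\varphi^2\delta_{j1}$, the constant and gradient contributions combine via the elementary identity $\sum_{k,m}(\lambda_k-\lambda_m)^2 - 2\sum_{k}(\lambda_k-\lambda_1)^2 = 2\sum_{2\le k<m}(\lambda_k-\lambda_m)^2$ into a manifestly nonnegative residual.

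It then remains to show the surviving K\"ahler piece $\tfrac{1}{\varphi}\sum_{k,m}\tilde R_{m\bm k\bk}(\lambda_k-\lambda_m)^2$ is nonnegative. The frame $\{e_i\}$ is not $\tilde\omega$-unitary, but $\{f_i:=\varphi^{-1/2}e_i\}$ is, and by $4$-linearity of $\tilde R$ the sum only rescales by the positive scalar $\varphi^2$ under this change of frame; the QOB hypothesis applied in $\{f_i\}$ with the same weights $\lambda_k$ delivers exactly what we need. The main obstacle I anticipate is the computation of the middle step: \eqref{eq: Q under conformal change} has seven terms of mixed sign, and the collapse to the clean expression above depends sensitively on the simultaneous normalizations $\varphi_{i\bj}=\varphi\,\delta_{ij}$, $\varphi_1=\varphi$, and $|\d\log\varphi|^2_{i\d\db\varphi}=1/\varphi$; careful bookkeeping of which indices trigger which Kronecker deltas, and in particular verifying that the diagonal contribution ($m=k$) contains no residual even though the individual terms do not vanish, is essential.
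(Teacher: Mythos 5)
Your proposal is correct and takes essentially the same route as the paper: both use Proposition~\ref{prop: LCK with potential are Q-symmetric} to pass to the symmetric form of the condition, substitute the Vaisman identity $|\d\log\varphi|^2_{\varphi^{-1}i\d\db\varphi}=1$ into the conformal-change formula~\ref{eq: Q under conformal change}, and reduce to the quadratic orthogonal bisectional curvature of the K\"ahler cover (with the same harmless constant rescaling between $g$-unitary and $\tilde\omega$-unitary frames). The only cosmetic difference is that the paper bounds the non-curvature terms termwise via $\varphi_k\varphi_{\bk}+\varphi_m\varphi_{\bm}\leq |\d\varphi|^2_{i\d\db\varphi}$ instead of rotating the frame so that $\d\varphi$ is aligned with $e_1$ and summing exactly, and your worry about the diagonal terms is moot since they carry weight $(\lambda_k-\lambda_m)^2=0$.
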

\begin{proof}
	Suppose $M$ is a Vaisman manifold and $\tilde{M}\to M$ be a K\"ahler cyclic cover with nonnegative quadratic orthogonal bisectional curvature. Since Vaiman manifolds are LCK with potential, by Proposition~\ref{prop: LCK with potential are Q-symmetric}, we know that on a Vaisman manifiold $Q$ satisfy the symmetry $Q_{i\bj k\bl} = Q_{k\bl i \bj}$. Moreover, in normal coordinates for $i\d\db \varphi$,  i.e where  $\varphi_{i\bj}=\delta_{i\bj}$, and for $m\neq k$, equation~\ref{eq: Q under conformal change} reduces to
	\begin{align}
	Q_{m\bm k\bk} &= c\varphi^{-1}\left[R^{i\d\db\varphi}_{m\bm k\bk}+\frac{1}{\varphi}-\frac{\varphi_k\varphi_{\bk}+\varphi_m\varphi_{\bm}}{\varphi^2}\right]\\
	& \geq  c\varphi^{-1}\left[R^{i\d\db\varphi}_{m\bm k\bk}+\frac{1-|\d\log \varphi|_{\varphi^{-1}i\d\db\varphi}^2}{\varphi}\right]
	\end{align}
	and by the Vaisman condition, we have $|\d\log \varphi|_{\varphi^{-1}i\d\db\varphi}^2 = 1$. So in the those coordinates, for any $\lambda_1, \ldots, \lambda_n$, we get
	\[\sum_{m, k}Q_{m\bm k\bk}(\lambda_m-\lambda_k)^2 \geq c\varphi^{-1}\sum_{m, k}R^{i\d\db\varphi}_{m\bm k\bk}(\lambda_m-\lambda_k)^2 \geq 0\] 
\end{proof}

\begin{Example}
	A diagonal Hopf surface is one which can be written as $M_{\alpha, \beta} = \C^2\setminus (0, 0)/\sim$ where $(z_1, z_2)\sim (\alpha z_1, \beta z_2)$ for $|\alpha| = |\beta|<1$. These manifold are diffeomorphic to $S^3\times S^1$ and are non-K\"ahler as $b_1 = 1$. They also admit a Vaisman metric given explicitly by
	\begin{equation}
		\omega_{M_{\alpha, \beta}} = \frac{4\sqrt{-1}\d\db |z|^2}{|z|^2}
	\end{equation}
	this metric is $Q$-nonegative by Proposition~\ref{thm: Vaisman manifold and Q-positivity}. In fact this metric is pluriclosed and $Q$ is identically $Q$. One can check that $h^{1, 1}_{BC} = 1$. Thus Theorem~\ref{Main Theorem} implies there exist a Bismut parallel $(1, 1)$-form $\rho$, the two eigenspaces of this form are then Bismut parallel subspaces of $T^{1, 0}$, hence this gives a splitting of the holomorphic tangent bundle with respect to the Bismut connection.  We remark that that Gauduchon and Ornea constructed Vaisman metrics on all class 1 Hopf  surfaces in \cite{OrneaGau}, however the Vaiman metrics on the non-diagonal Hopf surfaces are not $Q$-nonnegative. It would be interesting to know if the non-diagonal Hopf surfaces admit other metrics that are $Q$-nonnegative. 
\end{Example}

\begin{Example}
	A above construction of a diagonal Hopf surface can be generalized to higher dimensions. Define $M_{\alpha} = \C^n\setminus (0, \ldots, 0)/\sim$ where $\alpha = (\alpha_1, \ldots, \alpha_n)$ satisfy $|\alpha_1| = \cdots =|\alpha_n|<1$ and $(z_1, \ldots, z_n)\sim (\alpha_1 z_1, \ldots, \alpha_nz_n)$, then 
	\begin{equation}
		\omega_{M_{\alpha}} = \frac{4\sqrt{-1}\d\db |z|^2}{|z|^2}
	\end{equation}
	is still $Q$-nonnegative by Proposition~\ref{thm: Vaisman manifold and Q-positivity}. In the higher dimensional case, the metrics are no longer pluriclosed and $Q$ does not vanish identically. 
\end{Example}

\paragraph{Acknowledgements:}I would like to thank my advisor Duong Phong for many helpful suggestions and for his constant support and encouragement. I am also grateful to Nikita Klemyatin for bringing to my attention the paper \cite{OV: LCK-Rank}.

\Addresses

\begin{thebibliography}{1000}
	\bibitem{BG}R.L. Bishop and S.I. Goldberg, {\em On the second cohomology group of a K\"ahler manifold of positive curvature},  Proc. Amer. Math. Soc. 16 (1965), 119–122. 
	\bibitem{Bis}J-M. Bismut, {\em A local index theorem for non-K\"ahler manifolds}, Math. Ann. 284 (1989), no. 4, 681–699.
	\bibitem{ChauTam}A. Chau and L.F. Tam, {\em On quadratic orthogonal bisectional curvature}, J. Differential Geom. 92 (2012), no. 2, 187–200.
	\bibitem{FHP}T. Fei, Z. Huang and S. Picard, {\em A Construction of Infinitely Many Solutions to the Strominger System}, arXiv:1703.10067. 
	\bibitem{FeiPh} T. Fei and D.H. Phong, {\em Unification of the K\"ahler-Ricci flow and Anomaly flows}, Surveys in Differential Geometry, Vol. 23 (2018), pp. 89-104
	\bibitem{FeiYau}T. Fei and S.T. Yau, {\em Invariant solutions to the Strominger system on complex Lie groups and their quotients}, Comm. Math. Phys. 338 (2015), no. 3, 1183–1195.
	\bibitem{FuYau}J. Fu and S.T. Yau, {\em The theory of superstring with flux on non-K\"ahler manifolds and the complex Monge-Ampere equation}, J. Differential Geom. 78 (2008), no. 3, 369–428.
	\bibitem{Gau}P. Gauduchon, {\em Le théorème de l'excentricité nulle. (French)}, C. R. Acad. Sci. Paris Sér. A-B 285 (1977), no. 5, A387–A390.
	\bibitem{HuWi}C. Hull and E. Witten, {\em Supersymmetric sigma models and the heterotic string},  Physics Letters B, Vol 160, Number 6 (1985), 398-402.
	\bibitem{HSW}A. Howard, B. Smyth and H. Wu, {\em On compact K\"ahler manifolds of nonnegative bisectional curvature. I}, Acta Math. 147 (1981), no. 1-2, 51–56. 
	\bibitem{Mok}N. Mok. {\em The uniformization theorem for compact Kähler manifolds of nonnegative holomorphic bisectional curvature}, J. Differential Geom. 27 (1988), no. 2, 179–214.
	\bibitem{MSY}N. Mok, Y.T. Siu and S.T. Yau, {\em The Poincaré-Lelong equation on complete K\"ahler manifolds}, Compositio Math. 44 (1981), no. 1-3, 183–218.
	\bibitem{Mori}S. Mori, {\em Projective manifolds with ample tangent bundles}, Ann. of Math. (2) 110 (1979), no. 3, 593–606.
	\bibitem{NiTam1}L. Ni and L.F. Tam, {\em Plurisubharmonic functions and the structure of complete K\"ahler manifolds with nonnegative curvature}, J. Differential Geom. 64 (2003), no. 3, 457–524.
	\bibitem{NiTam2}L. Ni and L.F. Tam, {\em The Poincaré-Lelong equation via the Hodge-Laplace heat equation}, Compos. Math. 149 (2013), no. 11, 1856–1870.
	\bibitem{OrneaGau}L. Ornea and P. Gauduchon, {\em Locally conformally K\"ahler metrics on Hopf surfaces}, Ann. Inst. Fourier (Grenoble) 48 (1998), no. 4, 1107–1127.
	\bibitem{OV: Vaisman}L. Ornea and M. Verbitsky, {\em Structure theorem for compact Vaisman manifolds}, Math. Res. Lett. 10 (2003), no. 5-6, 799–805.
	\bibitem{OV: LCK+P}L. Ornea and M. Verbitsky, {\em Locally conformal K\"ahler manifolds with potential}, Math. Ann. 348 (2010), no. 1, 25–33.
	\bibitem{OV: LCK-Rank}L. Ornea and M. Verbitsky, {\em LCK rank of locally conformally K\"ahler manifolds with potential}, J. Geom. Phys. 107 (2016), 92–98.
	\bibitem{OV: Hopf}L. Ornea and M. Verbitsky, {\em Hopf surfaces in locally conformally K\"ahler manifolds with potential}, Geometriae Dedicata (2019). 
	\bibitem{Phong}D.H. Phong, {\em Geometric Partial Differential Equations from Unified String Theories}, Proceedings of the ICCM 2018, Taipei, Taiwan. 
	\bibitem{PPZ1}D.H. Phong, S. Picard and X. Zhang, {\em Anomaly flows}, Comm. Anal. Geom. 26 (2018), no. 4, 955-1008. 
	\bibitem{PPZ2}D.H. Phong, S. Picard and X. Zhang, {\em The Anomaly flow on unimodular Lie groups}, Advances in complex geometry, 217–237, Contemp. Math., 735, Amer. Math. Soc., Providence, RI, 2019. 
	\bibitem{SiuYau}Y.T. Siu and S.T. Yau, {\em Compact K\"ahler manifolds of positive bisectional curvature}, Invent. Math. 59 (1980), no. 2, 189–204. 
	\bibitem{ST1}J. Streets and G. Tian, {\em A parabolic flow of pluriclosed metrics} Int. Math. Res. Not. IMRN 2010, no. 16, 3101–3133. 
	\bibitem{ST2}J. Streets and G. Tian, {\em Regularity results for pluriclosed flow} Geom. Topol. 17 (2013), no. 4, 2389–2429.
	\bibitem{Strom}A. Strominger, {\em Superstrings with torsion}, Nuclear Phys. B 274 (1986), no. 2, 253-284.
	\bibitem{Ust}Y. Ustinovskiy, {\em On the Structure of Hermitian Manifolds with Semipositive Griffiths Curvature}, Trans. Amer. Math. Soc. (2020)
	\bibitem{Vaisman} I. Vaisman, {\em Generalized Hopf manifolds}, Geom. Dedicata 13 (1982), no. 3, 231–255.
	\bibitem{Ver}M. Verbitsky, {\em Theorems on the vanishing of cohomology for locally conformally hyper-Kähler manifolds. (Russian) },  Tr. Mat. Inst. Steklova 246 (2004), Algebr. Geom. Metody, Svyazi i Prilozh., 64–91; translation in Proc. Steklov Inst. Math. 2004, no. 3(246), 54–78
	\bibitem{WYZ}Q. Wang, B.  Yang and F. Zheng, {\em On Bismut flat manifolds}, arXiv:1603.07058
	\bibitem{ZhaoZheng}Q. Zhao and F. Zheng, {\em Strominger connection and pluriclosed metrics}, arXiv:1904.06604
\end{thebibliography}
\end{document}